				\newtheorem{theorem}{Theorem}
                \newtheorem{lemma}[theorem]{Lemma}
\theoremstyle{definition} \newtheorem*{definition}{Definition}
\theoremstyle{definition} \newtheorem{hypothesis}{Hypothesis}
\theoremstyle{definition} \newtheorem{remark}{Remark}
\theoremstyle{definition} \newtheorem{example}{Example}
\begin{document}

\title{Stochastic Evolution Equations with Multiplicative Poisson Noise and Monotone Nonlinearity: A New Approach}
\author{Erfan Salavati, Bijan Z. Zangeneh\\ \\
Department of Mathematical Sciences,\\
Sharif University of Technology\\
Tehran, Iran}
\date{}

\maketitle

\begin{abstract}
Semilinear stochastic evolution equations with multiplicative Poisson noise and monotone nonlinear drift are considered. We do not impose coercivity conditions on coefficients. A novel method of proof for establishing existence and uniqueness of the mild solution is proposed. Examples on stochastic partial differential equations and stochastic delay differential equations are provided to demonstrate the theory developed. 
\end{abstract}

\vspace{2mm}
\noindent{Mathematics Subject Classification: 60H10, 60H15, 60G51, 47H05, 47J35.}
\vspace{2mm}

\noindent{Keywords: Stochastic Evolution Equation, Monotone Operator, L\'evy Noise, It\"o type inequality, Stochastic Convolution Integral.}

\section{Introduction}\label{section: introduction}

Consider the stochastic evolution equation
\begin{equation}\label{main_equation}
    dX_t=AX_t dt+f(t,X_t) dt + g(t,X_{t-})d W_t + \int_E k(t,\xi,X_{t-}) \tilde{N}(dt,d\xi),
\end{equation}
where $W_t$ is a cylindrical Wiener process and $\tilde{N}(dt,d\xi)$ is a compensated Poisson random measure. We assume $f$ is semimonotone and $g$ and $k$ are Lipschitz and have linear growth. In section~\ref{section: Problem} the assumptions on coefficients are stated precisely. The purpose of this article is to prove the existence and uniqueness of the solution of this equation.

The special cases of equation~\eqref{main_equation} have been studied by several authors. For the case that all the coefficients are Lipschitz see~\cite{DaPrato_Zabczyk_book} for Wiener noise and~\cite{Kotelenez-1984} for general martingale noise and~\cite{Peszat-Zabczyk},~\cite{Albeverio-Mandrekar-Rudiger-2009} and~\cite{Marinelli-Prevot-Rockner} for the case of jump noise.
In the non-Lipschitz case there are two main approaches. The first approach is the variational approach in which the coefficients satisfy certain monotonicity and coercivity properties. For this approach see~\cite{Pardoux},~\cite{Krylov-Rozovskii} and~\cite{Rockner} for Wiener noise,~\cite{Gyongy} for general martingales and~\cite{Brzezniak-Liu-Zhu} for L\'evy noise. The second approach is the semigroup approach to semilinear stochastic evolution equations with monotone drift. This approach has first appeared in deterministic context in~\cite{Browder} and~\cite{Kato} and has been extended to stochastic evolution equations in~\cite{Zangeneh-Thesis} and~\cite{Zangeneh-Paper}.
There are other works with this approach, e.g the exponential asymptotic stability of solutions in the case of Wiener noise has been studied in~\cite{Jahanipour-Zangeneh}, generalizing the previous results to stochastic functional evolution equations with coefficients depending on the past path of the solution is done in~\cite{Jahanipur-functional-stability}, the large deviation principle for the case of Wiener noise is studied in~\cite{Dadashi-Zangeneh}. A limiting problem of such equations arising from random motion of highly elastic strings has been considered in~\cite{Zamani-Zangeneh-random-motion}. Finally, the stationarity of a mild solution to a stochastic evolution equation with a monotone nonlinear drift and Wiener noise is studied in~\cite{Zangeneh_Nualart}.

We should mention the remarkable article~\cite{Marinelli-Rockner-wellposedness} which considers monotone nonlinear drift and multiplicative Poisson noise on certain function spaces and proves the existence, uniqueness and regular dependence of the mild solution on initial data. They impose an additional positivity assumption on the semigroup and the drift term is the Nemitsky operator associated with a real monotone function. Their idea is to regularize the monotone nonlinearity $f$ by its Yosida approximation $f_\lambda(x)= \lambda^{-1}(x-(I+\lambda f)^{-1}(x))$. We will treat their result as a special case of our theory in Example~\ref{example: finite_diemnsional_noise}.

The semigroup approach to semilinear stochastic evolution equations with monotone nonlinearities has an advantage relative to the variational method since it does not require the coercivity. There are important examples, such as stochastic partial differential equations of hyperbolic type with monotone nonlinear terms, for which the generator does not satisfy the coercivity property and hence the variational method is not directly applicable to these equations. Pardoux~\cite{Pardoux} has developed a new theory for the application of the variational method to second order hyperbolic equations. But as is shown in Example~\ref{example: finite_diemnsional_noise_hyperbolic}, this problem can be treated directly in semigroup setting.

The main contribution of this article is Theorem~\ref{theorem:existence and uniqueness} in section~\ref{section: Existence and Uniqueness} which shows the existence and uniqueness of the mild solution for equation~\eqref{main_equation}. In section~\ref{section: Problem} the precise assumptions on coefficients are stated. In section~\ref{section:examples} we will provide some concrete examples to which our results apply. These examples consist of semilinear stochastic partial differential equations and a stochastic delay differential equation.

We will use the notion of stochastic integration with respect to cylindrical Wiener process and compensated Poisson random measure. For this definition and properties see~\cite{Peszat-Zabczyk} and~\cite{Albeverio-Mandrekar-Rudiger-2009}.

\section{The Assumptions}\label{section: Problem}

Let $H$ be a separable Hilbert space with inner product $\langle \, , \, \rangle$. Let $S_t$ be a $C_0$ semigroup on $H$ with infinitesimal generator $A:D(A)\to H$. Furthermore we assume the exponential growth condition on $S_t$ holds, i.e. there exists a constant $\alpha$ such that $\| S_t \| \le e^{\alpha t}$. If $\alpha=0$, $S_t$ is called a contraction semigroup. We denote by $L_{HS}(K,H)$ the space of Hilbert-Schmidt mappings from a Hilbert space $K$ to $H$.

Let $(\Omega,\mathcal{F},\mathcal{F}_t,\mathbb{P})$ be a filtered probability space. Let $(E,\mathcal{E})$ be a measurable space and $N(dt,d\xi)$ a Poisson random measure on $\mathbb{R}^+ \times E$ with intensity measure $dt \nu(d\xi)$. Our goal is to study equation~\eqref{main_equation} in $H$, where $W_t$ is a cylindrical Wiener process on a Hilbert space $K$ and $\tilde{N}(dt,d\xi)=N(dt,d\xi)-dt\nu(d\xi)$ is the compensated Poisson random measure corresponding to $N$. We assume that $N$ and $W_t$ are independent. We also assume the following,

\begin{hypothesis}\label{main_hypothesis}
    \begin{description}

        \item[(a)] $f(t,x,\omega):\mathbb{R}^+\times H\times \Omega \to H$ is measurable, $\mathcal{F}_t$-adapted, demicontinuous with respect to $x$ and there exists a constant $M$ such that
            \[ \langle f(t,x,\omega)-f(t,y,\omega),x-y \rangle \le M \|x-y\|^2,\]

        \item[(b)] $g(t,x,\omega):\mathbb{R}^+\times H\times \Omega \to L_{HS}(K,H)$ and $k(t,\xi,x,\omega):\mathbb{R}^+\times E\times H\times \Omega \to H$ are predictable and there exists a constant $C$ such that
            \[ \| g(t,x,\omega)-g(t,y,\omega)\|_{L_{HS}(K,H)}^2 + \int_{E}\|k(t,\xi,x)-k(t,\xi,y)\|^2 \nu(d\xi) \le C \|x-y \|^2,\]

        \item[(c)] There exists a constant $D$ such that
            \[ \| f(t,x,\omega)\|^2 + \| g(t,x,\omega)\|_{L_{HS}(K,H)}^2 + \int_{E}\|k(t,\xi,x)\|^2 \nu(d\xi) \le D(1+\|x\|^2),\]
        \item[(d)] $X_0(\omega)$ is $\mathcal{F}_0$ measurable and square integrable.
    \end{description}

\end{hypothesis}

\begin{definition}
    By a \emph{mild solution} of equation~\eqref{main_equation} with initial condition $X_0$ we mean an adapted c\`adl\`ag process $X_t$ that satisfies
    \begin{multline}\label{mild_solution}
        X_t=S_t X_0+\int_0^t S_{t-s}f(s,X_s) ds+\int_0^t{S_{t-s}g(s,X_{s-})d W_s}\\
        + \int_0^t{\int_E {S_{t-s}k(s,\xi,X_{s-})} \tilde{N}(ds,d\xi).}
    \end{multline}
\end{definition}

Because of the presence of monotone nonlinearity in our equation, the usual inequalities for stochastic convolution integrals are not applicable to equation~\eqref{main_equation}. For this reason we state the following inequality.

\begin{theorem}[It\^o type inequality, Zangeneh~\cite{Zangeneh-Paper}]\label{theorem:ito type inequality}
    Let $Z_t$ be an $H$-valued c\`adl\`ag locally square integrable semimartingale. If
    \[ X_t=S_t X_0 + \int_0^t S_{t-s}dZ_s, \]
    then
    \begin{equation*}
        \lVert X_t \rVert ^2 \le e^{2\alpha t}\lVert X_0 \rVert ^2 + 2 \int_0^t {e^{2\alpha (t-s)}\langle X_{s-} , d Z_s \rangle}+\int_0^t {e^{2\alpha (t-s)}d[Z]_s},
    \end{equation*}
    where $[Z]_t$ is the quadratic variation process of $Z_t$.
\end{theorem}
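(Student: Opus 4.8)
The plan is to reduce to the case of a bounded generator via the Yosida approximation, where the stochastic convolution becomes a genuine semimartingale solving an ordinary stochastic differential equation; there one can apply the Hilbert-space It\^o formula directly, exploit a dissipativity estimate on the approximate generator, and then pass to the limit.

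First I would introduce the Yosida approximation $A_n=nA(nI-A)^{-1}$, which is a bounded operator on $H$, and write $S^n_t=e^{tA_n}$ for the corresponding uniformly continuous semigroup. Standard Hille--Yosida theory supplies the strong convergence $S^n_t x\to S_t x$ for every $x\in H$, uniformly for $t$ in compact intervals, a growth bound $\|S^n_t\|\le e^{\alpha_n t}$, and a dissipativity-type estimate $\langle A_n x,x\rangle\le \alpha_n\|x\|^2$, with $\alpha_n\to\alpha$. Setting $X^n_t=S^n_t X_0+\int_0^t S^n_{t-s}\,dZ_s$, I would use the identity $S^n_{t-s}=I+\int_s^t A_n S^n_{r-s}\,dr$ together with a stochastic Fubini argument (licit because $A_nS^n_u$ is bounded and $Z$ is locally square integrable) to show that $X^n$ solves
\[
    X^n_t=X_0+\int_0^t A_n X^n_r\,dr + Z_t,
\]
so $X^n$ is a c\`adl\`ag semimartingale whose martingale and jump parts coincide with those of $Z$.

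Next I would apply the It\^o formula for $\|\cdot\|^2$ in $H$ to $X^n$. Since the drift $\int_0^\cdot A_n X^n_r\,dr$ is continuous and of finite variation, it contributes nothing to the quadratic variation, whence $[X^n]_t=[Z]_t$ and
\[
    \|X^n_t\|^2=\|X_0\|^2+2\int_0^t\langle X^n_s,A_n X^n_s\rangle\,ds+2\int_0^t\langle X^n_{s-},dZ_s\rangle+[Z]_t.
\]
Bounding the drift integrand by $\alpha_n\|X^n_s\|^2$ and applying the integrating factor $e^{-2\alpha_n t}$ pathwise (a variation-of-constants step in which the discarded term $2\alpha_n\|X^n_s\|^2-2\langle X^n_s,A_nX^n_s\rangle$ is nonnegative) yields the desired inequality at the approximate level, with $\alpha_n$ in place of $\alpha$ and $X^n$ in place of $X$.

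Finally I would let $n\to\infty$. The growth bound and the uniform-on-compacts strong convergence $S^n\to S$, fed into a maximal inequality for the stochastic convolution, give $\sup_{s\le t}\|X^n_s-X_s\|\to 0$ in probability, hence almost surely along a subsequence, while $\alpha_n\to\alpha$. The deterministic terms and the $d[Z]$ integral then converge by dominated convergence, and the stochastic integral term converges in probability by a dominated-convergence theorem for stochastic integrals. I expect this limit passage to be the main obstacle: one must control the convergence of the stochastic integrands strongly enough to move the inequality to the limit, and since $S_t$ is only strongly, not uniformly, continuous, every estimate must be phrased through strong convergence and the exponential growth bound rather than operator-norm convergence.
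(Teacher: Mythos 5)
The paper does not actually prove this theorem: it is imported as a black box from Zangeneh~\cite{Zangeneh-Paper}, so there is no internal proof to compare you against. Judged on its own, your Yosida-regularization strategy is the classical route to energy inequalities for stochastic convolutions, and the core of it is sound: $A_n=nA(nI-A)^{-1}$ is bounded with $\|e^{tA_n}\|\le e^{\alpha_n t}$ for $\alpha_n=n\alpha/(n-\alpha)\to\alpha$, which does give $\langle A_nx,x\rangle\le\alpha_n\|x\|^2$; the stochastic Fubini step correctly identifies $X^n$ as a genuine semimartingale with $[X^n]=[Z]$; and the integrating-factor computation discards a term of the right sign, yielding the inequality at level $n$ exactly as stated. (One bookkeeping point: $Z$ is only a locally square integrable semimartingale, so you should localize and split $Z=M+V$ into a local martingale and a finite-variation part before invoking isometries or maximal inequalities.)

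The soft spot is the limit passage, and it is softer than you acknowledge. First, Kotelenez-type maximal inequalities apply to convolutions against a single semigroup; $S^n_{t-s}-S_{t-s}$ is not a semigroup, so they do not directly bound $\sup_t\|\int_0^t(S^n_{t-s}-S_{t-s})\,dM_s\|$. What is available cheaply is fixed-$t$ convergence: for the martingale part the isometry gives $\mathbb{E}\|\int_0^t(S^n_{t-s}-S_{t-s})\,dM_s\|^2=\mathbb{E}\int_0^t\operatorname{tr}\bigl((S^n_{t-s}-S_{t-s})Q_s(S^n_{t-s}-S_{t-s})^*\bigr)\,d\langle M\rangle_s\to0$ by strong convergence and trace-class domination, and the finite-variation part converges pathwise by dominated convergence. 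Second, and more importantly, fixed-$t$ convergence of $X^n_t$ is not enough to pass the term $\int_0^te^{2\alpha_n(t-s)}\langle X^n_{s-},dZ_s\rangle$ to the limit: a dominated convergence theorem for stochastic integrals needs the integrands $X^n_{s-}$ to converge uniformly on compacts in probability (or at least in $d\mathbb{P}\,d[M]$-measure), together with a uniform domination, so you must either prove ucp convergence by another device (e.g.\ interpolating through $\int_0^t S_{t-s}(R_n-I)\,dZ_s$ with $R_n=n(nI-A)^{-1}$, to which the maximal inequality for $S$ does apply) or restructure the limit argument to work with a fixed $t$ and a countable dense set plus right-continuity. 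These gaps are reparable, but as written the final paragraph asserts the hardest part of the proof rather than proving it.
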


\section{The Main Result}\label{section: Existence and Uniqueness}

Our proof for the existence of a mild solution relies on an iterative method which in each step requires solving a deterministic equation, i.e. an equation in which $\omega$ appears only as a parameter. The following theorem proved in Zangeneh~\cite{Zangeneh-measurability} and~\cite{Zangeneh-Thesis} guarantees the solvability of such equations and the measurability of the solution with respect to parameter.

Let $(\Omega,\mathcal{F},\mathcal{F}_t,\mathbb{P})$ be a filtered probability space and assume $f$ satisfies Hypothesis~\ref{main_hypothesis}-(a) and there exists a constant $D$ such that $\|f(t,x,\omega)\|^2 \le D (1+\|x\|^2)$ and assume $V(t,\omega)$ is an adapted process with c\`adl\`ag trajectories and $X_0(\omega)$ is $\mathcal{F}_0$ measurable.

\begin{theorem}[Zangeneh,~\cite{Zangeneh-measurability} and~\cite{Zangeneh-Thesis}]\label{theorem: measurability}
    With assumptions made above, the equation
        \[ X_t=S_t X_0 + \int_0^t S_{t-s}f(s,X_s,\omega) ds + V(t,\omega)\]
    has a unique measurable adapted c\`adl\`ag solution $X_t(\omega)$. Furtheremore
    \[	\|X(t)\| \le \|X_0\|+\|V(t)\|+\int_0^t e^{(\alpha+M)(t-s)} \|f(s,S_s X_0+V(s))\| ds. \]
\end{theorem}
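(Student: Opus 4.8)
The plan is to fix $\omega$ and solve the equation as a deterministic semilinear Volterra equation, deferring measurability and adaptedness to the final step. First I would reduce to the genuinely monotone case: writing $\hat f(t,x)=f(t,x)-Mx$, Hypothesis~\ref{main_hypothesis}-(a) makes $-\hat f(t,\cdot,\omega)$ monotone, and absorbing the $Mx$ term into the generator replaces $S_t$ by $e^{Mt}S_t$, whose growth bound is $\alpha+M$; this is exactly what produces the factor $e^{(\alpha+M)(t-s)}$ in the asserted estimate. After this reduction it suffices to treat $f$ dissipative, i.e. $\langle f(t,x)-f(t,y),x-y\rangle\le 0$, and to set $U_t:=S_tX_0+V(t)$ so that the equation reads $X_t=U_t+\int_0^t S_{t-s}f(s,X_s)\,ds$.

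Uniqueness is the easy half. Given two solutions, their difference $D_t$ satisfies $D_t=\int_0^t S_{t-s}\bigl(f(s,X_s)-f(s,\tilde X_s)\bigr)\,ds$, which has the form covered by Theorem~\ref{theorem:ito type inequality} with a driving process of finite variation, hence $[Z]\equiv 0$. The inequality together with dissipativity gives $\|D_t\|^2\le 2\int_0^t e^{2\alpha(t-s)}\langle D_s,f(s,X_s)-f(s,\tilde X_s)\rangle\,ds\le 0$, so $D\equiv 0$.

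For existence I would regularize $f$ by its Yosida approximation. Since $-\hat f(t,\cdot,\omega)$ is monotone and demicontinuous on all of $H$, it is maximal monotone, so the resolvent $J_\lambda=(I-\lambda f(t,\cdot))^{-1}$ is well defined and $f_\lambda:=f(t,J_\lambda\,\cdot)$ is Lipschitz, still dissipative, and satisfies $\|f_\lambda(t,x)\|\le\|f(t,x)\|$, thereby inheriting the linear growth. For each $\lambda$ the mild equation with $f_\lambda$ is solvable by Picard iteration because $S_{t-s}$ is uniformly bounded on $[0,t]$ and $f_\lambda$ is Lipschitz; call the solution $X^\lambda$. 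Applying Theorem~\ref{theorem:ito type inequality} to $X^\lambda-X^\mu$ and using the monotone-operator identity $J_\lambda x-x=\lambda f_\lambda(x)$ together with dissipativity of $f$ evaluated at $J_\lambda X^\lambda_s$ and $J_\mu X^\mu_s$ yields a bound of order $\lambda+\mu$, so $\{X^\lambda\}$ is Cauchy in $\sup_{s\le t}\|\cdot\|$. Passing to the limit $X^\lambda\to X$, demicontinuity of $f$ and the relation $J_\lambda X^\lambda_s\to X_s$ identify the weak limit of $f_\lambda(s,X^\lambda_s)=f(s,J_\lambda X^\lambda_s)$ as $f(s,X_s)$, while the uniform linear-growth bound lets me pass to the limit in the convolution integral; the c\`adl\`ag property is inherited from $U$ since the convolution term is continuous.

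The a priori estimate and measurability finish the proof. Writing $Y_t:=X_t-U_t=\int_0^t S_{t-s}f(s,X_s)\,ds$ and applying Theorem~\ref{theorem:ito type inequality} (again with $[Z]=0$), dissipativity gives $\langle Y_s,f(s,X_s)\rangle\le\langle Y_s,f(s,U_s)\rangle\le\|Y_s\|\,\|f(s,U_s)\|$, so $\|Y_t\|^2\le 2\int_0^t e^{2\alpha(t-s)}\|Y_s\|\,\|f(s,U_s)\|\,ds$; a scalar Gronwall lemma for this quadratic inequality, followed by $\|X_t\|\le\|U_t\|+\|Y_t\|$ and undoing the reduction, produces the stated bound with $f$ evaluated at $S_sX_0+V(s)$ and exponent $\alpha+M$. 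Measurability and adaptedness are obtained by tracking the construction in $\omega$: the resolvents $J_\lambda$ and hence $f_\lambda$ are jointly measurable, each Picard iterate is adapted and measurable since it is assembled from $f$, $V$ and $X_0$ by composition and Bochner integration, and these properties pass to the limits $X^\lambda$ and then $X$. The main obstacle is the existence step --- specifically controlling $\langle X^\lambda_s-X^\mu_s,f_\lambda(X^\lambda_s)-f_\mu(X^\mu_s)\rangle$ so that the Yosida family is Cauchy, and then justifying the weak passage to the limit through the non-Lipschitz, merely demicontinuous $f$.
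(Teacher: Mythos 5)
The paper does not actually prove Theorem~\ref{theorem: measurability}: it is imported verbatim from \cite{Zangeneh-measurability} and \cite{Zangeneh-Thesis} (with only a remark that the evolution-operator hypotheses there are automatic for $C_0$ semigroups), so there is no in-paper argument to compare against. Your proposal supplies a self-contained proof by Yosida regularization --- the same device the introduction credits to \cite{Marinelli-Rockner-wellposedness} --- and its skeleton is sound: reduction to a dissipative drift, maximal monotonicity of the (everywhere-defined, monotone, demicontinuous hence hemicontinuous) operator $-\hat f(t,\cdot)$ via Minty's theorem, the resolvent identity $J_\lambda x = x+\lambda f_\lambda(x)$ driving the $O(\lambda+\mu)$ Cauchy estimate, and identification of the weak limit of $f(s,J_\lambda X^\lambda_s)$ through demicontinuity plus dominated convergence against a fixed test vector. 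What this route buys is exactly what the paper outsources: measurability and adaptedness become comparatively easy, since each $X^\lambda$ is a Picard limit of adapted measurable processes and $X$ is a pointwise limit along $\lambda=1/n$, whereas the cited reference treats measurability as a separate, nontrivial issue.

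Two steps need more care than your sketch gives them. First, the Cauchy estimate for $X^\lambda-X^\mu$ uses $\lambda\|f_\lambda(s,X^\lambda_s)\|^2$-type terms, so you must first establish $\sup_\lambda\sup_{s\le t}\|X^\lambda_s\|<\infty$ (run your energy/Gronwall estimate on each $X^\lambda$, using $\|f_\lambda(s,U_s)\|\le\|f(s,U_s)\|$ and linear growth) before the $O(\lambda+\mu)$ bound closes; as written the uniform a priori bound is assumed rather than proved. Second, absorbing $Mx$ into the generator does not leave the forcing term alone: the mild equation for $A$ with drift $\hat f+M\cdot$ and forcing $V$ is equivalent to the mild equation for $A+MI$ with drift $\hat f$ and forcing $V_t+M\int_0^t e^{M(t-s)}S_{t-s}V_s\,ds$, so ``undoing the reduction'' does not immediately return the stated estimate in terms of $\|V(t)\|$ alone. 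It is cleaner to skip the reduction in the a priori bound and estimate directly $\langle Y_s,f(s,X_s)\rangle\le M\|Y_s\|^2+\|Y_s\|\,\|f(s,U_s)\|$ with $Y_s=X_s-U_s$, which a quadratic Gronwall lemma converts into $\|Y_t\|\le\int_0^t e^{(\alpha+M)(t-s)}\|f(s,S_sX_0+V(s))\|\,ds$, i.e.\ the asserted kernel. Neither point is fatal; both are standard repairs, and the overall strategy is a legitimate alternative to citing the external references.
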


\begin{remark}
    Note that the original theorem is stated for evolution operators and requires some additional assumptions, but those are automatically satisfied for $C_0$ semigroups. (See Curtain and Pritchard~\cite{Curtain-Pritchard} page 29, Theorem 2.21).
\end{remark}

\begin{theorem}[Existence and Uniqueness of the Mild Solution]\label{theorem:existence and uniqueness}
    Under the assumptions of Hypothesis~\ref{main_hypothesis}, equation~\eqref{main_equation} has a unique square integrable c\`adl\`ag mild solution with initial condition $X_0$.
\end{theorem}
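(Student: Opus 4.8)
The plan is to set up a Picard-type iteration that decouples the monotone drift from the noise. The key structural idea is to absorb the two stochastic integrals into a single forcing term and repeatedly invoke Theorem~\ref{theorem: measurability} to solve the resulting deterministic (parameter-dependent) equation at each step. Concretely, I would define a map $\Phi$ on a suitable space of adapted c\`adl\`ag processes as follows: given $Y$, first form the stochastic convolution
\[
V^Y(t,\omega)=\int_0^t S_{t-s}g(s,Y_{s-})\,dW_s+\int_0^t\int_E S_{t-s}k(s,\xi,Y_{s-})\,\tilde N(ds,d\xi),
\]
which under Hypothesis~\ref{main_hypothesis}-(b),(c) is a well-defined adapted c\`adl\`ag process, and then let $\Phi(Y)=X$ be the unique solution of
\[
X_t=S_tX_0+\int_0^t S_{t-s}f(s,X_s,\omega)\,ds+V^Y(t,\omega),
\]
whose existence, uniqueness, adaptedness, and measurability are furnished by Theorem~\ref{theorem: measurability}. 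A fixed point of $\Phi$ is exactly a mild solution of~\eqref{main_equation}, so the whole problem reduces to showing $\Phi$ is a contraction (or has a unique fixed point after iteration) in an appropriate norm.

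First I would work on a fixed finite horizon $[0,T]$ and equip the space of adapted c\`adl\`ag processes with a norm of the type $\sup_{t\le T}\mathbb{E}\,e^{-\beta t}\lVert X_t\rVert^2$ for a parameter $\beta$ to be chosen large. The heart of the estimate is to control $\lVert\Phi(Y^1)_t-\Phi(Y^2)_t\rVert^2$. Here I would \emph{not} try to estimate the convolutions directly; instead I would apply the It\^o type inequality of Theorem~\ref{theorem:ito type inequality} to the difference process. Writing $D_t=\Phi(Y^1)_t-\Phi(Y^2)_t$, this difference has the form $D_t=\int_0^t S_{t-s}\,dZ_s$ where $dZ_s$ collects the drift increment $\bigl(f(s,\Phi(Y^1)_s)-f(s,\Phi(Y^2)_s)\bigr)ds$ together with the martingale increments driven by $g$ and $k$ evaluated along $Y^1,Y^2$. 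Theorem~\ref{theorem:ito type inequality} then yields
\[
\lVert D_t\rVert^2\le 2\int_0^t e^{2\alpha(t-s)}\langle D_{s-},dZ_s\rangle+\int_0^t e^{2\alpha(t-s)}\,d[Z]_s.
\]
The crucial cancellation is that the drift contribution to $\langle D_{s-},dZ_s\rangle$ is $\langle D_s,f(s,\Phi(Y^1)_s)-f(s,\Phi(Y^2)_s)\rangle$, which by the semimonotonicity in Hypothesis~\ref{main_hypothesis}-(a) is bounded above by $M\lVert D_s\rVert^2$ — a one-sided bound requiring no Lipschitz control of $f$. Taking expectations, the martingale part of $\langle D_{s-},dZ_s\rangle$ vanishes, while the quadratic variation term $\mathbb{E}\int_0^t e^{2\alpha(t-s)}d[Z]_s$ is controlled by the Lipschitz bound Hypothesis~\ref{main_hypothesis}-(b) applied to $g$ and $k$, producing a term of order $\int_0^t\mathbb{E}\lVert Y^1_s-Y^2_s\rVert^2\,ds$.

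Assembling these gives an inequality of the schematic form
\[
\mathbb{E}\lVert D_t\rVert^2\le c_1\int_0^t\mathbb{E}\lVert D_s\rVert^2\,ds+c_2\int_0^t\mathbb{E}\lVert Y^1_s-Y^2_s\rVert^2\,ds,
\]
with constants depending on $\alpha,M,C,T$. A Gronwall argument on the $D$-term, followed by choosing $\beta$ large in the weighted norm (or equivalently iterating $\Phi$ and summing a convergent series of the Picard type), makes $\Phi$ a strict contraction; the Banach fixed point theorem then delivers a unique fixed point, which is the desired mild solution, and square integrability is propagated by the same estimates started from the linear-growth bound in Hypothesis~\ref{main_hypothesis}-(c) together with the a priori bound in Theorem~\ref{theorem: measurability}. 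The main obstacle I anticipate is purely the drift term: because $f$ is only semimonotone and demicontinuous rather than Lipschitz, one cannot estimate $\Phi(Y^1)-\Phi(Y^2)$ by naive Lipschitz bounds, and it is precisely the interplay between the It\^o type inequality and the one-sided monotonicity estimate that sidesteps this difficulty. Care is also needed to verify that the martingale terms inside $\langle D_{s-},dZ_s\rangle$ are genuine (locally) square integrable martingales so that their expectation vanishes, and to handle the jump part of $[Z]$ correctly, but these are technical rather than conceptual points.
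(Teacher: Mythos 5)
Your proposal follows essentially the same route as the paper: the same decoupling of the monotone drift from the noise via Theorem~\ref{theorem: measurability}, the same Picard-type iteration $X^n=\Phi(X^{n-1})$, and the same key estimate combining the It\^o type inequality of Theorem~\ref{theorem:ito type inequality} with the one-sided semimonotonicity bound for the drift, the Lipschitz control of the quadratic variation, and Gronwall's lemma. The only differences are organizational: the paper runs the explicit iteration in the norm $\mathbb{E}\sup_{0\le t\le T}\lVert\cdot\rVert^2$ (which costs a Burkholder--Davis--Gundy step but guarantees a c\`adl\`ag limit, and then still requires a final passage to the limit in the $f$-term using demicontinuity and dominated convergence), whereas your fixed-point phrasing with $\sup_{t}\mathbb{E}\,e^{-\beta t}\lVert\cdot\rVert^2$ sidesteps that limit passage because $X=\Phi(X)$ \emph{is} the mild equation, at the price of having to argue separately that the space is complete and that the fixed point has a c\`adl\`ag adapted version (e.g.\ by noting it lies in the range of $\Phi$).
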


This theorem has been stated without proof in~\cite{Proceedings}

    \begin{lemma}\label{lemma: alpha=0}
        It suffices to prove theorem~\ref{theorem:existence and uniqueness} for the case that $\alpha=0$.
    \end{lemma}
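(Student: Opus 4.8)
The plan is to reduce the general case to the contraction case by a change of variables that absorbs the exponential growth of the semigroup. Recall that the exponential growth hypothesis gives $\|S_t\| \le e^{\alpha t}$, and the semimonotonicity constant $M$ in Hypothesis~\ref{main_hypothesis}-(a) plays an analogous role; the idea is that the combination $\alpha + M$ is what really controls the dissipativity, so rescaling should let us assume $\alpha = 0$ without loss of generality.

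First I would introduce the rescaled semigroup $\tilde{S}_t := e^{-\alpha t} S_t$, whose generator is $\tilde{A} := A - \alpha I$. Since $\|\tilde{S}_t\| = e^{-\alpha t}\|S_t\| \le 1$, the semigroup $\tilde{S}_t$ is a contraction, i.e. it satisfies the growth condition with exponent $\tilde{\alpha} = 0$. Correspondingly I would define a transformed process $\tilde{X}_t := e^{-\alpha t} X_t$ (equivalently $X_t = e^{\alpha t}\tilde{X}_t$) and check that $X_t$ is a mild solution of~\eqref{main_equation} with semigroup $S_t$ and coefficients $f, g, k$ if and only if $\tilde{X}_t$ is a mild solution of an equation of the same form but driven by $\tilde{S}_t$ and suitably rescaled coefficients. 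The key computation is to substitute into the mild formulation~\eqref{mild_solution}: because $S_{t-s} = e^{\alpha(t-s)}\tilde{S}_{t-s}$, each convolution term $\int_0^t S_{t-s}(\cdots)\,\mathrm{d}s$ becomes $e^{\alpha t}\int_0^t \tilde{S}_{t-s} e^{-\alpha s}(\cdots)\,\mathrm{d}s$ after factoring out $e^{\alpha t}$, and similarly for the stochastic convolutions against $\mathrm{d}W_s$ and $\tilde{N}(\mathrm{d}s,\mathrm{d}\xi)$. Dividing the whole identity by $e^{\alpha t}$ then exhibits $\tilde{X}_t$ as the mild solution of the transformed equation with new coefficients $\tilde{f}(t,x) := e^{-\alpha t} f(t, e^{\alpha t}x)$, $\tilde{g}(t,x) := e^{-\alpha t} g(t, e^{\alpha t}x)$, and $\tilde{k}(t,\xi,x) := e^{-\alpha t} k(t,\xi, e^{\alpha t}x)$, and initial datum $\tilde{X}_0 = X_0$.

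The remaining step is to verify that the transformed coefficients again satisfy Hypothesis~\ref{main_hypothesis}, now with $\tilde{\alpha}=0$. For the semimonotonicity of $\tilde{f}$, writing $u = e^{\alpha t}x$ and $v = e^{\alpha t}y$ gives
\[
\langle \tilde{f}(t,x)-\tilde{f}(t,y),\, x-y\rangle
= e^{-2\alpha t}\langle f(t,u)-f(t,v),\, u-v\rangle
\le e^{-2\alpha t}\, M\,\|u-v\|^2
= M\,\|x-y\|^2,
\]
so $\tilde{f}$ is semimonotone with the same constant $M$; demicontinuity and adaptedness are clearly preserved under the continuous rescaling. For the Lipschitz condition~(b), the factor $e^{-\alpha t}$ out front multiplies $\|u-v\|^2 = e^{2\alpha t}\|x-y\|^2$, so on any finite horizon $[0,T]$ the Lipschitz constant $C$ is replaced by $C e^{2\alpha T}$ (when $\alpha<0$ one can keep $C$), which is again a finite constant; the linear growth condition~(c) is handled the same way, with the constant $D$ replaced by a horizon-dependent constant. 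Since existence and uniqueness of a square-integrable càdlàg mild solution is a statement on each finite interval $[0,T]$, these horizon-dependent constants cause no difficulty.

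I expect the only real subtlety to be bookkeeping: confirming that the rescaling maps the stochastic convolution integrals correctly (the stochastic integral against $\tilde{N}$ and $W$ transforms linearly under the deterministic, adapted factor $e^{-\alpha s}$, which is legitimate precisely because the integrands remain predictable) and that integrability is not lost, since $\tilde{X}_t$ is square integrable if and only if $X_t$ is, on any finite horizon. Thus if the theorem holds for the contraction case $\tilde{\alpha}=0$ applied to the transformed equation, the unique solution $\tilde{X}_t$ yields, via $X_t = e^{\alpha t}\tilde{X}_t$, the unique square-integrable càdlàg mild solution of the original equation; uniqueness transfers because the correspondence $X_t \leftrightarrow \tilde{X}_t$ is a bijection between mild solutions. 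This establishes the reduction claimed in the lemma.
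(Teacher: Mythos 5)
Your proposal is correct and follows essentially the same route as the paper: rescale by $e^{-\alpha t}$ to get the contraction semigroup $\tilde{S}_t=e^{-\alpha t}S_t$ and the transformed coefficients $\tilde{f},\tilde{g},\tilde{k}$, and observe that $X_t\mapsto e^{-\alpha t}X_t$ is a bijection between mild solutions of the two equations. (One small simplification: since conditions (a) and (b) involve $\|x-y\|^2$, the factors $e^{-2\alpha t}$ and $e^{2\alpha t}$ cancel exactly, so the constants $M$ and $C$ are preserved without any horizon dependence; only the linear-growth constant in (c) may need adjustment on $[0,T]$.)
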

    \begin{proof} Define
        \begin{gather*}
           \tilde{S}_t= e^{-\alpha t} S_t ,\qquad \tilde{f}(t,x,\omega)=e^{-\alpha t}f(t,e^{\alpha t}x,\omega) ,\qquad \tilde{g}(t,x,\omega)=e^{-\alpha t}g(t,e^{\alpha t}x,\omega), \\
           \tilde{k}(t,\xi,x,\omega)=e^{-\alpha t}k(t,\xi,e^{\alpha t}x,\omega).
        \end{gather*}
        Note that $\tilde{S}_t$ is a contraction semigroup. It is easy to see that $X_t$ is a mild solution of equation~\eqref{main_equation} if and only if $\tilde{X}_t=e^{-\alpha t} X_t$ is a mild solution of equation with coefficients $\tilde{S},\tilde{f},\tilde{g},\tilde{k}$.
    \end{proof}

\begin{proof}[Proof of Theorem~\ref{theorem:existence and uniqueness}.]
    \emph{Uniqueness.}
    According to the lemma, we can assume $\alpha=0$. Assume that $X_t$ and $Y_t$ are two mild solutions with same initial conditions. Subtracting them we find
    \[ X_t-Y_t=\int_0^t S_{t-s} dZ_s,\]
    where
    \begin{multline*}
        dZ_t=(f(t,X_t)-f(t,Y_t))dt+(g(t,X_{t-})-g(t,Y_{t-}))dW_t\\
        +\int_E{(k(t,\xi,X_{t-})-k(t,\xi,Y_{t-}))d\tilde{N}}.
    \end{multline*}
    Applying It\^o type inequality (Theorem~\ref{theorem:ito type inequality}) for $\alpha=0$ to $X_t-Y_t$ we find
    \[ \| X_t-Y_t \| ^2 \le 2 \int_0^t {\langle X_{s-}-Y_{s-} , d Z_s \rangle}+[Z]_t. \]
    Taking expectations and noting that integrals with respect to cylindrical Wiener processes and compensated Poisson random measures are martingales, we find that
    \[ \mathbb{E}\lVert X_t-Y_t \rVert ^2 \le 2 \int_0^t {\mathbb{E}{\langle X_{s-}-Y_{s-} , f(s,X_s)-f(s,Y_s) \rangle} ds}+\mathbb{E}[Z]_t, \]
    where
    \[ \mathbb{E}[Z]_t = \int_0^t\mathbb{E}\|g(s,X_s)-g(s,Y_s)\|^2ds+\int_0^t\int_E{\mathbb{E}\|k(s,\xi,X_s)-k(s,\xi,Y_s)\|^2\nu(d\xi)ds}. \]
            Note that for a c\`adl\`ag function the set of discontinuity points is countable, hence when integrating with respect to Lebesgue measure, they can be neglected. We therefore neglect the left limits in integrals with respect to the Lebesque measure henceforth. Using assumptions of Hypothesis~\ref{main_hypothesis}-(a) and ~\ref{main_hypothesis}-(b) we find that
    \[ \mathbb{E}\lVert X_t-Y_t \rVert ^2 \le (2M+C) \int_0^t {\mathbb{E}\| X_s-Y_s\|^2 ds}.\]
    Using Gronwall's lemma we conclude that $X_t=Y_t$, almost surely.

    \emph{Existence.}
    It suffices to prove the existence of a solution on a finite interval $[0,T]$. Then one can show easily that these solutions are consistent and give a global solution. We define adapted c\`adl\`ag processes $X^{n}_t$ recursively as follows. Let $X^0_t=S_t X_0$. Assume $X^{n-1}_t$ is defined. Theorem~\ref{theorem: measurability} implies that there exists an adapted c\`adl\`ag solution $X^n_t$ of
    \begin{equation}\label{equation: proof of existence_iteration}
        X^n_t=S_t X_0 + \int_0^t S_{t-s}f(s,X^n_s) ds + V^n_t,
    \end{equation}
    where
        \[ V^{n}_t= \int_0^t{S_{t-s}g(s,X^{n-1}_{s-})d W_s} + \int_0^t{\int_E {S_{t-s}k(s,\xi,X^{n-1}_{s-})} \tilde{N}(ds,d\xi)}. \]
    We wish to show that $\{X^n\}$ converges and the limit is the desired mild solution. This is done by the following lemmas.
    \begin{lemma}\label{lemma: finite_second_moment_iteration}
    	\[ \mathbb{E}\sup\limits_{0\le t\le T} \|X^n_t\|^2<\infty. \]
    \end{lemma}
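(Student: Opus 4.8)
The plan is to prove the uniform second-moment bound $\mathbb{E}\sup_{0\le t\le T}\|X^n_t\|^2<\infty$ by induction on $n$, combining the deterministic estimate from Theorem~\ref{theorem: measurability} with the standard maximal inequalities for stochastic convolutions driven by Wiener and Poisson noise. The base case $X^0_t=S_t X_0$ is immediate since $\|S_t\|\le 1$ (we are in the $\alpha=0$ case) and $X_0$ is square integrable, giving $\mathbb{E}\sup_{t\le T}\|X^0_t\|^2\le \mathbb{E}\|X_0\|^2<\infty$.

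For the inductive step, assume the bound holds for $X^{n-1}$. First I would bound the stochastic convolution term $V^n_t$. Although Theorem~\ref{theorem:ito type inequality} gives a pathwise inequality, to control $\sup_{t\le T}\|V^n_t\|^2$ in expectation the cleanest route is a maximal inequality for stochastic convolutions: since $S_t$ is a contraction semigroup, the factorization method (or a direct Burkholder--Davis--Gundy argument applied after the It\^o-type inequality) yields
\begin{equation*}
\mathbb{E}\sup_{0\le t\le T}\|V^n_t\|^2 \le C_T\left(\int_0^T \mathbb{E}\|g(s,X^{n-1}_s)\|_{L_{HS}}^2\,ds+\int_0^T\!\!\int_E \mathbb{E}\|k(s,\xi,X^{n-1}_s)\|^2\,\nu(d\xi)\,ds\right).
\end{equation*}
Applying the linear growth bound of Hypothesis~\ref{main_hypothesis}-(c) turns the right-hand side into $C_T\int_0^T(1+\mathbb{E}\|X^{n-1}_s\|^2)\,ds$, which is finite by the inductive hypothesis. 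Hence $\mathbb{E}\sup_{t\le T}\|V^n_t\|^2<\infty$.

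Next I would invoke the pathwise growth estimate furnished by Theorem~\ref{theorem: measurability}, namely
\begin{equation*}
\|X^n_t\|\le \|X_0\|+\|V^n_t\|+\int_0^t e^{M(t-s)}\|f(s,S_sX_0+V^n_s)\|\,ds,
\end{equation*}
using $\alpha=0$. Squaring, using $(a+b+c)^2\le 3(a^2+b^2+c^2)$, and applying the linear growth of $f$ from Hypothesis~\ref{main_hypothesis}-(c) to the integral term, I obtain a pointwise bound of $\|X^n_t\|^2$ in terms of $\|X_0\|^2$, $\sup_{t\le T}\|V^n_t\|^2$, and $\int_0^T(1+\|S_sX_0\|^2+\|V^n_s\|^2)\,ds$. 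Taking the supremum over $t\le T$ and then expectations, all three contributions are finite: the first by square integrability of $X_0$, the second by the previous paragraph, and the third again by square integrability of $X_0$ together with the finiteness of $\mathbb{E}\sup_{t\le T}\|V^n_t\|^2$. This closes the induction and establishes $\mathbb{E}\sup_{t\le T}\|X^n_t\|^2<\infty$.

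The main obstacle I anticipate is the first step, converting the pathwise It\^o-type inequality into an $L^2$ maximal inequality for the stochastic convolution $V^n_t$; the integrand $\langle X_{s-},dZ_s\rangle$ is a martingale, so its expectation vanishes, but to control the \emph{supremum} in time one needs a genuine Burkholder--Davis--Gundy estimate rather than just taking expectations. Care is needed because the convolution $S_{t-s}$ prevents $V^n_t$ from being a martingale directly; the standard remedies are the factorization method of Da~Prato--Kabanov--Zabczyk (valid here since $S_t$ is a contraction) or the stochastic Fubini theorem combined with the It\^o-type inequality applied to $V^n$ itself. Either way the contraction property $\|S_t\|\le 1$ and the Hilbert--Schmidt and $\nu$-square-integrability bounds of Hypothesis~\ref{main_hypothesis}-(c) are exactly what make the constants $C_T$ finite and independent of $n$ at this stage.
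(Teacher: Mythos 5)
Your argument is essentially the paper's: induction on $n$, the pathwise bound from Theorem~\ref{theorem: measurability} combined with $(a+b+c)^2\le 3(a^2+b^2+c^2)$, Cauchy--Schwarz and the linear growth of $f$, reducing everything to the finiteness of $\mathbb{E}\sup_{t\le T}\|V^n_t\|^2$, which is handled by exactly the maximal inequality you state. The paper obtains that inequality by citing Theorem 1.1 of Kotelenez~\cite{Kotelenez-1984} (the stopped Doob inequality for stochastic convolutions under a contraction semigroup), which is the safer reference here: the factorization method you mention as one option is delicate for Poisson integrands, whereas the Kotelenez/BDG route you also describe is the one that actually applies.
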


	\begin{proof}
		We prove by induction on $n$. By Theorem~\ref{theorem: measurability} we have the following estimate,
	    \[	\|X^n_t\|\le \|X_0\|+\|V^n_t\|+\int_0^t e^{M(t-s)} \|f(s,S_s X_0+V^n_s)\| ds. \]
	    Hence,
	    \[	\sup\limits_{0\le t\le T} \|X^n_t\|^2\le 3 \|X_0\|^2+3\sup\limits_{0\le t\le T} \|V^n_t\|^2+3\sup\limits_{0\le t\le T} (\int_0^t e^{M(t-s)} \|f(s,S_s X_0+V^n_s)\| ds)^2, \]
	    where by Cauchy-Schwartz inequality we find
	    \[ \le 3 \|X_0\|^2+3\sup\limits_{0\le t\le T} \|V^n_t\|^2 + 3 T e^{2MT} \int_0^T \|f(s,S_s X_0+V^n_s)\|^2 ds, \]
	    and by Hypothesis~\ref{main_hypothesis}-(c) we have
	    \begin{multline*}
	    	\le 3\|X_0\|^2+3\sup\limits_{0\le t\le T} \|V^n_t\|^2 + 3 T e^{2MT} \int_0^T D(1+\|S_s X_0+V^n_s\|^2) ds \\
	    		\le 3\|X_0\|^2+3\sup\limits_{0\le t\le T} \|V^n_t\|^2 + 3 D T e^{2MT}\int_0^T (1+ 2\|X_0\|^2+2\|V^n_s\|^2) ds\\
	    		= 3DT^2e^{2MT} + (3+6DT^2e^{2MT}) \|X_0\|^2 + (3+6DT^2e^{2MT})\sup\limits_{0\le t\le T} \|V^n_t\|^2.
	    \end{multline*}
	    Hence for completing the proof it suffices to show that $\mathbb{E}\sup\limits_{0\le t\le T} \|V^n_t\|^2<\infty$.

	    Applying Theorem 1.1 of~\cite{Kotelenez-1984} we find,
	    \[ \mathbb{E}\sup\limits_{0\le t\le T} \|V^n_t\|^2 \le \mathbf{C} \mathbb{E} \left( \int_0^T \|g(s,X^{n-1}_{s})\|_{HS}^2 ds +  \int_0^T \int_E \|k(s,\xi,X^{n-1}_{s})\|^2 \nu(d\xi) ds \right)\]
	    where by Hypothesis~\ref{main_hypothesis}-(c),
	    \[ \le \mathbf{C} \mathbb{E} \int_0^T D(1+\|X^{n-1}_s\|^2) ds) \]
	    which is finite by induction Hypothesis.
		The basis of induction follows directly from Hypothesis~\ref{main_hypothesis}-(d).
	\end{proof}

    \begin{lemma}\label{lemma: proof of existence_supremum convergence}
    For $0\le t\le T$ we have,
        \begin{equation} \label{equation: proof of existence_supremum convergence}
            \mathbb{E} \sup\limits_{0\le s\le t} \|X^{n+1}_s-X^n_s\|^2 \le C_0 C_1^n \frac{t^n}{n!}
        \end{equation}
				where $C_1= 2C(1+2\mathcal{C}_1^2) e^{4MT}$ and $C_0=\mathbb{E}\sup\limits_{0\le s\le T} \|X^1_s-X^0_s\|^2$.
				(Note that by Lemma~\ref{lemma: finite_second_moment_iteration}, $C_0<\infty$.)
    \end{lemma}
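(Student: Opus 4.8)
The plan is to derive a recursive integral inequality for $\phi_n(t) := \mathbb{E}\sup_{0\le s\le t}\|X^{n+1}_s-X^n_s\|^2$ and then close it by induction on $n$. Subtracting the defining equations \eqref{equation: proof of existence_iteration} for $X^{n+1}$ and $X^n$, the terms $S_t X_0$ cancel and one obtains $X^{n+1}_t - X^n_t = \int_0^t S_{t-s}\, dZ_s$, where the c\`adl\`ag semimartingale $Z$ has drift $(f(s,X^{n+1}_s)-f(s,X^n_s))\,ds$ and martingale part
\[
dM_s = (g(s,X^n_{s-})-g(s,X^{n-1}_{s-}))\,dW_s + \int_E (k(s,\xi,X^n_{s-})-k(s,\xi,X^{n-1}_{s-}))\,\tilde{N}(ds,d\xi).
\]
The crucial structural point is that the drift difference involves the \emph{current} iterates $X^{n+1},X^n$ (which feeds the monotonicity bound), whereas the noise difference involves the \emph{previous} iterates $X^n,X^{n-1}$ (which feeds the recursion). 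Applying the It\^o type inequality of Theorem~\ref{theorem:ito type inequality} with $\alpha=0$ to $U_t := X^{n+1}_t - X^n_t$ gives
\[
\|U_t\|^2 \le 2\int_0^t \langle U_{s-}, dZ_s\rangle + [Z]_t .
\]

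Next I would split the first integral into its drift and martingale contributions. Hypothesis~\ref{main_hypothesis}-(a) bounds the drift contribution pathwise by $2M\int_0^t \|U_s\|^2\,ds$ (neglecting left limits under Lebesgue integration, as in the uniqueness argument), while $[Z]_t=[M]_t$. Taking the supremum over $[0,t]$ and then expectations, the drift term is controlled by $2M\int_0^t \phi_n(s)\,ds$ since its integrand is nonnegative. The genuinely delicate term is $\mathbb{E}\sup_{s\le t}\int_0^s \langle U_{r-}, dM_r\rangle$: here I would invoke a Burkholder--Davis--Gundy inequality, with constant $\mathcal{C}_1$, to bound it by $\mathcal{C}_1\,\mathbb{E}(\sup_{s\le t}\|U_s\|^2\,[M]_t)^{1/2}$, and then Young's inequality to split off a term $\tfrac12\mathbb{E}\sup_{s\le t}\|U_s\|^2$ that can be absorbed into the left-hand side, leaving a multiple of $\mathbb{E}[M]_t$. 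Hypothesis~\ref{main_hypothesis}-(b), together with the compensation formula converting $[M]_t$ into its predictable $\nu$-integrated form, yields $\mathbb{E}[M]_t \le C\int_0^t \phi_{n-1}(s)\,ds$. Lemma~\ref{lemma: finite_second_moment_iteration} is what guarantees every quantity in sight is finite, so that the vanishing of the martingale expectations and the absorption step are legitimate.

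Collecting terms produces $\phi_n(t) \le 4M\int_0^t \phi_n(s)\,ds + 2C(1+2\mathcal{C}_1^2)\int_0^t \phi_{n-1}(s)\,ds$. Applying Gronwall's lemma to absorb the $\phi_n$ term converts the prefactor $2C(1+2\mathcal{C}_1^2)$ into $C_1 = 2C(1+2\mathcal{C}_1^2)e^{4MT}$, giving the one-step recursion $\phi_n(t) \le C_1\int_0^t \phi_{n-1}(s)\,ds$. A straightforward induction on $n$, starting from $\phi_0(t)\le C_0$, then yields $\phi_n(t)\le C_0 C_1^n t^n/n!$, which is exactly \eqref{equation: proof of existence_supremum convergence}.

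I anticipate the main obstacle to be the martingale supremum step: one must correctly apply the BDG inequality to the real-valued stochastic integral $\int_0^{\cdot} \langle U_{s-}, dM_s\rangle$, keep track of the exact constant $\mathcal{C}_1$, and calibrate the Young inequality so that precisely the right fraction of $\mathbb{E}\sup_{s\le t}\|U_s\|^2$ is absorbed; a careless constant here would propagate into $C_1$ and still give a finite bound, but the delicate part is ensuring the factorial structure $t^n/n!$ survives the Gronwall absorption. A secondary technical care is the honest passage from $[M]_t$ (whose jump part is written against $N$) to its compensated form before invoking Hypothesis~\ref{main_hypothesis}-(b).
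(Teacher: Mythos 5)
Your proposal is correct and follows essentially the same route as the paper: the same decomposition of $X^{n+1}-X^n$ into a drift part at the current iterates and a martingale part at the previous iterates, the It\^o type inequality with $\alpha=0$, the semimonotonicity bound for the drift, Burkholder--Davis--Gundy plus Young to absorb $\tfrac12\,\mathbb{E}\sup_{s\le t}\|X^{n+1}_s-X^n_s\|^2$ (with finiteness from Lemma~\ref{lemma: finite_second_moment_iteration}), and then Gronwall followed by induction to produce the factorial bound. The constants you obtain match those in the paper exactly.
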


    \begin{proof}
        We prove by induction on $n$. The statement is obvious for $n=0$. Assume that the statement is proved for $n-1$. We have,
        \begin{equation}\label{equation: proof of existence_X^n+1-X^n}
            X^{n+1}_t-X^n_t= \int_0^t S_{t-s}(f(s,X^{n+1}_s)-f(s,X^n_s)) ds + \int_0^t S_{t-s} dM_s,
        \end{equation}
        where
        \begin{eqnarray*}
            M_t &=& \int_0^t (g(s,X^{n}_{s-})-g(s,X^{n-1}_{s-}))dW_s \\
            & & + \int_0^t \int_E {(k(s,\xi,X^{n}_{s-})-k(s,\xi,X^{n-1}_{s-}))\tilde{N}(ds,d\xi)}.
        \end{eqnarray*}
        Applying It\^o type inequality (Theorem~\ref{theorem:ito type inequality}), for $\alpha=0$, we have
        \begin{multline}\label{equation:proof of existence 1}
            \lVert X^{n+1}_t-X^n_t \rVert ^2 \le 2 \underbrace{\int_0^t {\langle X^{n+1}_{s-}-X^n_{s-},f(s,X^{n+1}_s)-f(s,X^n_s)\rangle ds}}_{A_t}\\
            + 2 \underbrace{\int_0^t {\langle X^{n+1}_{s-}-X^n_{s-},dM_s\rangle}}_{B_t} + [M]_t.
        \end{multline}
				 For the term $A_t$, the semimonotonicity assumption on $f$ implies
        \begin{equation}\label{equation:proof of existence_A_t}
            A_t \le M \int_0^t \|X^{n+1}_s-X^n_s\|^2 ds
        \end{equation}
        We also have
        \begin{multline*}
           \mathbb{E}[M]_t = \int_0^t \mathbb{E}\|g(s,X^{n}_s)-g(s,X^{n-1}_s)\|^2ds\\
           + \int_0^t \int_E \mathbb{E}\|k(s,\xi,X^n_s)-k(s,\xi,X^{n-1}_s)\|^2 \nu(d\xi) ds,
        \end{multline*}
        where by Hypothesis~\ref{main_hypothesis}-(b),
        \begin{equation}\label{equation:proof of existence_[M]_t}
            \le C \int_0^t \mathbb{E}\|X^n_s-X^{n-1}_s\|^2 ds.
        \end{equation}

        Applying Burkholder-Davies-Gundy inequality~(\cite{Peszat-Zabczyk},Theorem 3.50) , for $p=1$, to term $B_t$ we find,
        \begin{eqnarray*}
            \mathbb{E}\sup\limits_{0\le s\le t}|B_s| & \le & \mathcal{C}_1 \mathbb{E}\left([B]_t^\frac{1}{2}\right) \\
            & \le & \mathcal{C}_1 \mathbb{E}\left(\sup\limits_{0\le s\le t} (\|X^{n+1}_s-X^n_s\|) [M]_t^\frac{1}{2} \right)
        \end{eqnarray*}
        where $\mathcal{C}_1$ is the universal constant in the Burkholder-Davies-Gundy inequality. Applying Cauchy-Schwartz inequality we find,
        \begin{equation}\label{equation:proof of existence_B_t}
           \le \frac{1}{4} \mathbb{E} \sup\limits_{0\le s\le t} \|X^{n+1}_s-X^n_s\|^2 + \mathcal{C}_1^2 \mathbb{E}[M]_t.
        \end{equation}
        Now, taking supremums and then expectation on both sides of~\eqref{equation:proof of existence 1} and substituting~\eqref{equation:proof of existence_A_t},~\eqref{equation:proof of existence_[M]_t} and~\eqref{equation:proof of existence_B_t}, we find
        \begin{multline}\label{equation:proof of existence 3}
            \mathbb{E} \sup\limits_{0\le s\le t} \|X^{n+1}_s-X^n_s\|^2 \le 2M \int_0^t \mathbb{E}\|X^{n+1}_s-X^n_s\|^2 ds\\
            + C(1+2\mathcal{C}_1^2) \int_0^t \mathbb{E}\|X^{n}_s-X^{n-1}_s\|^2 ds\\
            + \frac{1}{2} \mathbb{E}(\sup\limits_{0\le s\le t} \|X^{n+1}_s-X^n_s\|)^{2}.
        \end{multline}

        The last term in the right hand side could be subtracted from the left hand side but for this subtraction to be valid it should be finite which is guaranteed by Lemma~\ref{lemma: finite_second_moment_iteration}. After subtraction we find,
            \[ \mathbb{E} \sup\limits_{0\le s\le t} \|X^{n+1}_s-X^n_s\|^2 \le 4 M \int_0^t \mathbb{E}\|X^{n+1}_s-X^n_s\|^2 ds + 2C(1+2\mathcal{C}_1^2) \int_0^t \mathbb{E}\|X^n_s-X^{n-1}_s\|^2 ds, \]
        Now let $h^n(t)=\mathbb{E}\sup\limits_{0\le s\le t} \|X^{n+1}_s-X^n_s\|^2$. Hence,
        \[ h^n(t)\le 4 M \int_0^t h^n(s) ds + 2C(1+2\mathcal{C}_1^2) \int_0^t h^{n-1}(s)ds \]

         Note that by Lemma~\ref{lemma: finite_second_moment_iteration}, $h^n(t)$ is bounded on $[0,T]$. Hence we can use Gronwall's inequality for $h^n(t)$ and find

\[ h^n(t) \le C_1 \int_0^t h^{n-1}(s) ds \]
where by induction hypothesis,

\[ \le C_1 \int_0^t C_0 C_1^{n-1} \frac{s^{n-1}}{(n-1)!} ds = C_0 C_1^n \frac{t^n}{n!} \]
which completes the proof.

\end{proof}

    Returning to the proof of Theorem~\ref{theorem:existence and uniqueness}, we see that since the right hand side of~\eqref{equation: proof of existence_supremum convergence} is a convergent series, $\{X^n\}$ is a cauchy sequence in $L^2(\Omega,\mathcal{F},\mathbb{P};L^\infty([0,T];H))$ and hence converges to a process $X_t(\omega)$. By choosing a subsequence they converge almost sure uniformly with respect to $t$, and since $\{X^n_t\}$ are adapted c\`adl\`ag, so is $X_t$.

    It remains to show that $X_t$ is a solution of~\eqref{mild_solution}. It suffices to show that the terms on both sides of equation~\eqref{equation: proof of existence_iteration} converge to corresponding terms of~\eqref{mild_solution}. We know already that $X^n_t \to X_t$ in $L^2([0,T]\times\Omega;H)$. Moreover by Theorem 1.1 of~\cite{Kotelenez-1984} we have,
    \begin{multline*}
        \mathbb{E}\|\int_0^t S_{t-s} g(s,X^n_{s-})dW_s - \int_0^t S_{t-s} g(s,X_{s-})dW_s\|^2\\
        \le \mathbf{C} \mathbb{E} \int_0^t\|g(s,X^n_s)-g(s,X_s)\|^2 ds\\
        \le \mathbf{C} C \int_0^t\mathbb{E} \|X^n_s-X_s\|ds \to 0,
    \end{multline*}
    and
    \begin{multline*}
        \mathbb{E}\|\int_0^t\int_E S_{t-s} k(s,\xi,X^n_{s-})d\tilde{N} - \int_0^t\int_E S_{t-s} k(s,\xi,X_{s-})d\tilde{N}\|^2\\
        \le \mathbf{C} \mathbb{E} \int_0^t\int_E\|k(s,\xi,X^n_s)-k(s,\xi,X_s)\|^2 \nu(d\xi)ds\\
        \le \mathbf{C} C \int_0^t\mathbb{E} \|X^n_s-X_s\|ds \to 0.
    \end{multline*}
    Hence the terms of $V^n_t$ converge to corresponding terms of~\eqref{mild_solution}. Finally we show that the term containing $f$ in~\eqref{equation: proof of existence_iteration} converges in the weak sense to corresponding term in~\eqref{mild_solution}. If $x\in H$,
    \begin{equation}\label{equation:proof of existence 5}
        \mathbb{E}\langle x,\int_0^t S_{t-s} (f(s,X^n_s)-f(s,X_s)) ds\rangle = \mathbb{E}\int_0^t \langle S_{t-s}^*x,f(s,X^n_s)-f(s,X_s)\rangle ds
    \end{equation}
    By demicontinuity of $f$, the integrand on the right hand side converges to $0$ for almost every $(s,\omega)\in [0,t]\times\Omega$. On the other hand, by Hypothesis~\ref{main_hypothesis}-(c), the integrand is dominated by a constant multiple of $\|x\| (1+\|X_s\|+\|X^n_s\|)$ where $\|X^n_s\| \to \|X_s\|$ pointwise almost everywhere and in $L^1([0,T]\times\Omega)$, hence by dominated convergence theorem we conclude that right hand side of~\eqref{equation:proof of existence 5} tends to $0$. Hence $X_t$ is a mild solution of~\eqref{main_equation}.
\end{proof}

\section{Some Examples}\label{section:examples}

In this section we provide some concrete examples of semilinear stochastic evolution equations with monotone nonlinearity and multiplicative Poisson noise. The examples consist of stochastic partial differential equations of parabolic and hyperbolic type and a stochastic delay differential equation. We show that these examples satisfy the assumptions of equation~\eqref{main_equation} and hence one can apply Theorem~\ref{theorem:existence and uniqueness} to them.

\begin{example}[Stochastic reaction-diffusion equations with multiplicative Poisson noise]
\label{example: finite_diemnsional_noise}
    In this example we consider a class of semilinear stochastic evolution equations with multiplicative Poisson noise. Let $\mathcal{D}$ be a bounded domain with a smooth boundary in $\mathbb{R}^d$. Consider the equation,

    \begin{equation}\label{equation: example_finite_dimensional_noise}
        \left\{\begin{array}{lll}
            d u (t)  &=& Au(t) dt + f(u(t,x)) dt + \eta u(t) dt + \int_E k(t,\xi,u(t^-,x)) \tilde{N}(dt,d\xi)\\
            u(0) & = & u_0.
        \end{array} \right.
    \end{equation}
	where $A$ is the generator of a $C_0$ semigroup on $L^2(\mathcal{D})$, $f:\mathbb{R}\to\mathbb{R}$ is a continuous decreasing function with linear growth and $k:[0,T]\times E \times \mathbb{R}\times\Omega\to \mathbb{R}$ is measurable and satisfies the Lipschitz condition
	\[ \mathbb{E} \int_E |k(s,\xi,u)-k(s,\xi,v)|^2 \mu(d\xi) \le C |u-v|^2 \]
and the linear growth condition
	\[ \mathbb{E} \int_E |k(s,\xi,u)|^2 \mu(d\xi) \le D (1+|u|^2) \]
and $u_0\in L^2(\mathcal{D})$. We show that equation~\eqref{equation: example_finite_dimensional_noise} satisfies the assumptions of equation~\eqref{main_equation}. Let $H=L^2(\mathcal{D})$. We denote the Nemitsky operator associated with a function $f:\mathbb{R}\to\mathbb{R}$ by the same symbol. Since $f$ and $k$ are continuous and have linear growth, by Theorem (2.1) of Krasnosel'ski\u\i~\cite{Krasnoelskii}, the associated Nemitsky operators define continuous operators from $L^2(\mathcal{D})$ to $L^2(\mathcal{D})$ and have linear growth. Verifying the other assumptions is straight forward.

\begin{remark}
	Equation~\eqref{equation: example_finite_dimensional_noise} is exactly the same as the main equation studied in~\cite{Marinelli-Rockner-wellposedness}.
\end{remark}

\end{example}

\begin{example}[Second Order Stochastic Hyperbolic Equations with L\'evy noise] \label{example: finite_diemnsional_noise_hyperbolic}

	In this example we consider a hyperbolic SPDE with L\'evy noise. Let $\mathcal{D}$ be a bounded domain with a smooth boundary in $\mathbb{R}^d$, Consider the initial boundary value problem,
    \begin{equation}\label{equation: example_finite_dimensional_noise_hyperbolic}
        \left\{\begin{array}{lrll}
            \frac{\partial^2 u}{\partial t^2}  = \Delta u -\sqrt[3]{\frac{\partial u}{\partial t}} & + u(t^-,x) \frac{\partial Z}{\partial t}& \textrm{on} & [0,\infty) \times \mathcal{D}\\
            u =0 && \textrm{on} & [0,\infty) \times \partial \mathcal{D}\\
            u(0,x) = u_0(x) && \textrm{on} & \mathcal{D}.\\
            \frac{\partial u}{\partial t} (0,x) = 0 && \textrm{on} & \mathcal{D}.
        \end{array} \right.
    \end{equation}
    where $Z(t)$ is a real valued square integrable L\'evy process and $u_0(x) \in L^2(\mathcal{D})$ is the initial condition. One can replace $-\sqrt[3]{x}$ by any continuous decreasing real function with linear growth.

	Let $H^1(\mathcal{D})$ be the Sobolev space of weakly differentiable functions on $\mathcal{D}$ with derivative in $L^2(\mathcal{D})$ and let $H=H^1(\mathcal{D})\times L^2(\mathcal{D})$.
	
	Note that $\Delta$ is self adjoint and negative definite on $L^2$. Moreover, we have
        \[ D((-\Delta)^\frac{1}{2})=  H^1(\mathcal{D}). \]
    Hence by Lemma B.3 of~\cite{Peszat-Zabczyk}, the operator
        \[ \mathcal{A}=\left( {\begin{array}{cc} 0&I\\  \Delta &0 \end{array}} \right)\]
    generates a $C_0$ semigroup of contractions on $H$.

    Let $K=E=\mathbb{R}$. We also define for $(u,v) \in H$ and $\phi\in K$ and $\xi\in E$,
        \[ f(u,v)= \left( {\begin{array}{c} 0\\ -\sqrt[3]{v(x)} \end{array}} \right), g(u,v)=0 , k(\xi,u,v)= \left( {\begin{array}{c} 0\\ u(x) \xi \end{array}} \right) \]

    Hence equation~\eqref{equation: example_finite_dimensional_noise_hyperbolic} can be written as
        \[dX(t)=\mathcal{A}X(t) dt+ f(X(t))dt+ g(X(t^-))dW_t + \int_E k(\xi,X(t^-)) \tilde{N}(dt,d\xi) \]
	
	We claim that $f$, $g$ and $k$ satisfy Hypothesis~\ref{main_hypothesis}. The continuity of $f$, $g$ and $k$ follow as in example~\ref{example: finite_diemnsional_noise}. The other conditions are straightforward.
\end{example}

\begin{example}[Stochastic Delay Equations]
    In this example we consider a stochastic delay differential equation in $\mathbb{R}$.         The case of Lipschitz coefficients, have been studied before in~\cite{Peszat-Zabczyk}. We have replaced Lipschitzness of $f$ by the weaker assumption of semimonotonicity.

Consider the following equation,
    \begin{equation}\label{equation: example_delay_0}
        \left\{\begin{array}{ll}
            dx(t)=&\left( \int_{-1}^0 x(t+\theta) d\theta\right)dt - \sqrt[3]{x(t)}dt + x(t) dZ_t \\
            x(\theta)=& \sin(\pi \theta) ,\quad \theta \in (-1,0].
          \end{array} \right.
    \end{equation}
	where $Z_t$ is a real valued square integrable L\'evy process. We show that this equation satisfies the assumptions of equation~\eqref{main_equation}. $- \sqrt[3]{x}$ can be replaced by any continuous decreasing real function with linear growth and the initial condition can be replace with any function in $L^2((-1,0])$.

    Let $H=\mathbb{R}\times L^2((-1,0])$ and define the operator $A$ on $H$ by
        \[ A \left( {\begin{array}{c} u \\ v \end{array}} \right) = \left( {\begin{array}{c}  \int_{-1}^0 v(\theta) d\theta \\ \frac{\partial v}{\partial \theta} \end{array}} \right). \]
    According to Da Prato and Zabczyk~\cite{DaPrato_Zabczyk_book}, Proposition A.25, the operator $A$ with domain
        \[ D(A)=\left\{ \left( {\begin{array}{c} u \\ v \end{array}} \right) \in H : v\in W^{1,2}(-1,0), v(0)=u \right\} \]
    generates a $C_0$ semigroup $S_t$ on $H$. Let $K=E=\mathbb{R}$ and let $\tilde{N}$ be the compensated Poisson random measure associated with $Z_t$. Define for $\left( {\begin{array}{c} u \\ v \end{array}} \right)\in H$ and $\xi\in \mathbb{R}$,
        \[ f(u,v)= \left( {\begin{array}{c} - \sqrt[3]{u} \\ 0 \end{array}} \right), g(u,v)=0, k(\xi,u,v)= \left( {\begin{array}{c} \xi  u \\ 0 \end{array}} \right).\]
    It is easy to verify that $f$, $g$ and $k$ satisfy Hypothesis~\ref{main_hypothesis}. Now, if we let
        \[X(t)= \left( {\begin{array}{c} x(t)\\ x_t \end{array}} \right) \]
    where $x_t(\theta)=x(t+\theta)$ for $\theta\in (-1,0]$, then equation~\eqref{equation: example_delay_0} can be written as
        \[dX(t)=A X(t) dt+ f(X(t))dt+ g(X(t^-))dW_t + \int_E k(\xi, X(t^-)) \tilde{N}(dt,d\xi) \]
    with initial condition
        \[ X(0)= \left( {\begin{array}{c} \psi(0) \\ \psi \end{array}} \right) \].
\end{example}

\end{document}